\documentclass{amsart}

\usepackage[foot]{amsaddr}

\usepackage[T1]{fontenc}
\usepackage[utf8]{inputenc}
\usepackage[USenglish]{babel}
\usepackage{textcase}

\usepackage[
	bookmarks=true,
	plainpages=false,
	linktocpage,
	colorlinks=true,
	citecolor=green!80!black,
	linkcolor=red!70!black,
	filecolor=magenta,
	urlcolor=magenta,
	breaklinks,
	pdfauthor={Martin Winter},
]{hyperref}

\usepackage{amsmath,amsthm}
\usepackage{calc, mathtools} 

\iftrue
\usepackage{amssymb} 
\else
\usepackage[charter,cal=cmcal]{mathdesign}
\fi

\usepackage{colortbl,color} 
\usepackage[dvipsnames]{xcolor}
\usepackage{centernot} 
\usepackage{array} 
\usepackage{enumitem,moreenum} 
\usepackage{cite} 
\usepackage[nameinlink,capitalize,noabbrev]{cleveref}
\usepackage{nicefrac}

\usepackage{tikz-cd} 

\usepackage[font=small,labelfont=bf]{caption}

\usepackage{blkarray}

\usepackage{inconsolata}
\usepackage{dsfont}

\newcommand{\RR}{\mathbb{R}}    
                   
\def\^#1{^{(#1)}}
\def\s^#1{^{\smash{(#1)}}}

\def\:{\colon}

\definecolor{NiceBlue}{rgb}{0.15, 0.2, 0.75}
\newcommand{\Def}[1]{\textcolor{NiceBlue!90}{\textit{#1}}}

\newcommand{\labelstyle}[1]{\upshape(\textit{#1})}
\newcommand{\mylabel}{\labelstyle{\roman*}}
\newenvironment{myenumerate}{\begin{enumerate}[label=\mylabel]}{\end{enumerate}}

\def\itm#1{{\labelstyle{\romannumeral#1\relax}}}

\newcommand{\nls}{\nolinebreak\space}

\newcommand{\freespace}{\kern.07em} 
\newcommand{\free}{\freespace\cdot\freespace}

\newcommand{\bs}[1]{\boldsymbol{#1}}

\newcommand{\msays}[1]{{\footnotesize\textcolor{red}{#1}}}
\newcommand{\TODO}{\msays{TODO}}

\theoremstyle{plain}  
\newtheorem{theorem}{Theorem}
\newtheorem{corollary}[theorem]{Corollary}
\newtheorem{lemma}[theorem]{Lemma}

\newtheorem{conjecture}[theorem]{Conjecture}

\theoremstyle{definition} 

\crefname{theorem}{Theorem}{Theorems}
\crefname{proposition}{Proposition}{Propositions}
\crefname{lemma}{Lemma}{Lemmas}
\crefname{corollary}{Corollary}{Corollaries}
\crefname{conjecture}{Conjecture}{Conjecture}
\crefname{remark}{Remark}{Remarks}
\crefname{example}{Example}{Examples}
\crefname{definition}{Definition}{Definitions}
\crefname{problem}{Problem}{Problems}
\crefname{observation}{Observation}{Observation}
\crefname{construction}{Construction}{Construction}

\DeclareMathOperator{\conv}{conv}

\DeclareMathOperator*{\argmin}{argmin}   	
   	
\DeclareMathOperator{\vol}{vol}  	
\DeclareMathOperator{\Int}{int}

\let\<=\langle
\let\>=\rangle

\def\...{...}
\newcommand{\shortStyle}{\textit}

\newcommand{\wrt}{\shortStyle{w.r.t.}}

\makeatletter
\renewcommand*{\eqref}[1]{%
  \hyperref[{#1}]{\textup{\tagform@{\ref*{#1}}}}%
}
\makeatother

\begin{document}


\expandafter\title[A relation between Mahler volume and flag number for polytopes]
{A relation between Mahler volume and flag number for convex polytopes}
		
\author[M.\ Winter]{Martin Winter$^\dagger$}
\address{$^\dagger$Max-Planck Institute for Mathematics in the Sciences, Inselstraße 22, 04103 Leipzig, Germany}
\email{martin.winter@mis.mpg.de}
\date{\today}
	
\subjclass[2020]{51M20, 52A40, 52B05, 52B11}
\keywords{Mahler volume, flag number, Kalai's flag conjecture, centrally symmetric polytopes, Minkowski difference, polar duality}
		
\date{\today}

\maketitle

\section{Introduction}

Let $P\subset\RR^d$ be a convex full-dimensional polytope.
A \emph{flag} $\bs F=\{F_0\subset F_1\subset\cdots$ $\subset F_d\}$ of $P$ is a maximal chain of faces of $P$.
Let $\mathcal F(P)$ denote the number of flags of $P$.
We prove the following:

\begin{theorem}
    \label{thm:main}
    For a polytope $P\subset\RR^d$ holds
    \begin{equation}
        \label{eq:main}
        \vol(P) \vol(P-P)^\circ \le \frac{\mathcal F(P)}{(d!)^2}
    \end{equation}
\end{theorem}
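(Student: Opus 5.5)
The plan is to prove \eqref{eq:main} by induction on the dimension $d$, peeling off one facet at a time so that the flag count of $P$ splits as $\mathcal F(P)=\sum_{F}\mathcal F(F)$ over the facets $F$ of $P$. The case $d=1$ is a direct check: $P$ is a segment of length $\ell$, $P-P=[-\ell,\ell]$, $(P-P)^\circ=[-1/\ell,1/\ell]$, so the left side of \eqref{eq:main} is $2=\mathcal F(P)$. I am not sure the constant survives the inductive step; where it breaks I would fall back to the flag-simplex idea in the last paragraph.

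\textbf{Inductive step.} Write $K:=P-P$, which is centrally symmetric. Fix an interior point $x\in P$ and use the pyramid decomposition
$$\vol(P)=\frac1d\sum_F h_F\,\vol_{d-1}(F),$$
where $h_F$ is the distance from $x$ to the hyperplane of $F$. For each facet $F$, let $u_F$ be its unit normal, $H_F=u_F^\perp$, and $w_F$ the width of $P$ in direction $u_F$. I would use three facts.
\begin{myenumerate}
\item Projecting $K^\circ$ onto $H_F$ gives the polar of the section: $\pi_{H_F}(K^\circ)=(K\cap H_F)^\circ$, with the polar taken inside $H_F$.
\item The chord of $K^\circ$ through the origin in direction $u_F$ has length $2/h_K(u_F)=2/w_F$.
\item Since $F$ is a translate lying parallel to $H_F$, we have $F-F\subseteq K\cap H_F$, hence $(K\cap H_F)^\circ\subseteq (F-F)^\circ$.
\end{myenumerate}
Because $K^\circ$ is symmetric, its longest chord in any direction passes through the origin. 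Hence $\vol(K^\circ)\le \tfrac{2}{w_F}\vol_{d-1}\big((F-F)^\circ\big)$. The induction hypothesis bounds $\vol_{d-1}(F)\vol_{d-1}((F-F)^\circ)$ by $\mathcal F(F)/((d-1)!)^2$. Combining,
$$\vol(P)\vol(K^\circ)\le \sum_F \frac{2h_F}{d\,w_F}\cdot\frac{\mathcal F(F)}{((d-1)!)^2}.$$
To close the induction each summand would need the factor $\tfrac{2h_F}{d\,w_F}$ replaced by $1/d^2$.

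\textbf{Main obstacle.} The crude cylinder estimate above loses a factor of about $d$, and $h_F/w_F$ cannot be made small for all facets simultaneously. I would try two repairs.
\begin{myenumerate}
\item Instead of fixing a single point $x$, apply a cone-type estimate to $K^\circ$ itself: $K^\circ$ contains the double cone over $(K\cap H_F)^\circ$ with apex on the $u_F$-axis, and one hopes for the matching upper bound using that $K^\circ$ is a polar of a difference body, in the spirit of Rogers--Shephard. This would give a factor $2/(d\,w_F)$.
\item Average over the choice of $x$ and the facet to gain the factor $h_F/w_F\approx 1/2$ in mean.
\end{myenumerate}

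\textbf{Fallback.} If neither repair works, I would return to a flag triangulation. Choose points $x_{F_k}\in\operatorname{relint}F_k$ adapted to $K$, so that $P$ is the union of the flag simplices $S_{\bs F}=\conv\{x_{F_0},\dots,x_{F_d}\}$. It would then suffice to show, for each flag,
$$\vol(S_{\bs F})\,\vol(K^\circ)\le \frac{1}{(d!)^2},$$
using that $K^\circ$ lies in the intersection of the slabs $|\langle y,x_{F_k}-x_{F_{k-1}}\rangle|\le 1$. The choice of the points $x_{F_k}$, for example as width-maximizing chords in each face, is exactly where the constant $1/(d!)^2$ must be won.
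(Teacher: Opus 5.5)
Your proposal has a genuine gap: the induction step only gives $\vol(P)\vol(K^\circ)\le\sum_F\frac{2h_F}{d\,w_F}\cdot\frac{\mathcal F(F)}{((d-1)!)^2}$, and neither repair closes it.

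\emph{First repair.} It asks for $\vol(K^\circ)\le\frac{2}{d\,w_F}\vol_{d-1}\big((K\cap H_F)^\circ\big)$, but the \emph{reverse} inequality always holds. For any convex body $L$ and direction $u$, the chord-length function on $\pi_{u^\perp}L$ is concave, so $\vol(L)\ge\frac1d\,\ell_u\,\vol_{d-1}(\pi_{u^\perp}L)$, where $\ell_u$ is the longest chord. Your repair would therefore require equality, and it fails already for $P=\conv\{0,e_1,e_2\}$. There $K^\circ$ is the hexagon with vertices $\pm(1,0),\pm(0,1),\pm(1,1)$, of area $3$, whereas for $F=[e_1,e_2]$ your right-hand side equals $2$. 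Separately, $(K\cap H_F)^\circ$ is the projection of $K^\circ$, which is in general larger than the section $K^\circ\cap H_F$, so the double cone over it need not lie in $K^\circ$.

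\emph{Second repair.} Choosing or averaging $x$ cannot recover the lost factor $d$. For the cube $[0,1]^d$, which attains equality in the theorem, opposite facets satisfy $h_F+h_{F'}=w_F=1$. Hence your estimate equals $d\cdot\mathcal F(P)/(d!)^2$ for \emph{every} interior point $x$.

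\emph{Fallback.} It has the same root problem: your points $x_{F_k}$ are fixed independently of $y\in K^\circ$. Put $a_k=x_{F_k}-x_{F_{k-1}}$. Your slabs cut out a parallelepiped of volume $2^d/|\det(a_1,\dots,a_d)|$, while $\vol(S_{\bs F})=|\det(a_1,\dots,a_d)|/d!$. So you only get $\vol(S_{\bs F})\vol(K^\circ)\le 2^d/d!$. Even using all differences, i.e.\ $K^\circ\subseteq(S_{\bs F}-S_{\bs F})^\circ$, the best possible bound is the Makai volume of a simplex, $(d+1)/d!$, which is $(d+1)!$ times too large. So the per-flag inequality cannot follow from the geometry of $S_{\bs F}$ alone. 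It is also strictly stronger than the theorem, since in the equality cases it forces all flag simplices to have equal volume.

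\emph{Missing idea.} Let the triangulation depend on the dual point. Partition $K^\circ$ into the regions $Q_v$ on which $\langle t,\cdot\rangle$ induces a fixed vertex order $v$. For $t\in Q_v$, use the pulling triangulation of that order, whose flag-indexed simplices are $\conv\{v(F_0),\dots,v(F_d)\}$, with $v(F_k)$ the $\langle t,\cdot\rangle$-minimal vertex of $F_k$.

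On $Q_v$ the constraints become one-sided and ordered: $\langle t,v(F_{k-1})-v(F_k)\rangle\ge0$, and their telescoping sum is at most $1$ because $t\in(P-P)^\circ$. So the matrix $D_v$ with these rows maps $Q_v$ into the standard simplex of volume $1/d!$, not a box of volume $2^d$.

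The second ingredient is that the images $D_vQ_v$ for distinct orders have disjoint interiors. If $D_vt_v=D_wt_w$ at an interior point, induction along the flag shows that $\langle t_v,\cdot\rangle$ and $\langle t_w,\cdot\rangle$ agree on each $F_k$, so $t_v=t_w$ and $v=w$.

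Hence $\sum_v\vol(\Delta_v(\bs F))\vol(Q_v)=\frac1{d!}\sum_v\vol(D_vQ_v)\le\frac1{(d!)^2}$ for each flag. Summing over the $\mathcal F(P)$ flags gives the theorem.
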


The expression $\vol(P)\vol(P-P)^\circ$ is the \emph{Makai volume}, which is conjectured to be bounded below by $(d+1)/d!$ \cite{aliev2025mahler,makai1978thinnest}.
\Cref{thm:main} in particular implies the following relation between Mahler volume and number of flag conjectured by Freij, Schmitt, Schymura and Ziegler \cite[Conjecture 10.1]{faifman2025volume}\cite{freij2012face}:

\begin{corollary}
    \label{res:cs}
    For a centrally symmetric polytope $P\subset\RR^d$ holds
    \begin{equation}
        \label{eq:cs}
        \vol(P)\vol(P^\circ) \le \frac{2^d}{(d!)^2} \mathcal F(P).
    \end{equation}
\end{corollary}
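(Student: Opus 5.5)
We deduce \cref{res:cs} directly from \cref{thm:main}, using only how the difference body and polarity behave under scaling. If $P$ is centrally symmetric, we may translate it so that its center of symmetry is the origin. The quantities in \eqref{eq:cs} are unchanged by this: $\vol(P)$ and $\mathcal F(P)$ are translation invariant, and $P^\circ$ is understood with respect to the center, as is standard for the Mahler volume of a symmetric body. So assume $P=-P$.

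Then $P-P=P+(-P)=P+P=2P$, because $P$ is convex. Polarity is inverse to dilation, $(\lambda K)^\circ=\lambda^{-1}K^\circ$ for $\lambda>0$, so $(P-P)^\circ=(2P)^\circ=\tfrac12 P^\circ$. Hence
\[
\vol\big((P-P)^\circ\big)=\vol\big(\tfrac12 P^\circ\big)=2^{-d}\vol(P^\circ).
\]
Substituting into \eqref{eq:main} gives $\vol(P)\,2^{-d}\vol(P^\circ)\le \mathcal F(P)/(d!)^2$. Multiplying by $2^d$ yields \eqref{eq:cs}.

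There is no real obstacle; the argument is a direct specialization. The only step needing care is that $(P-P)^\circ$ in \cref{thm:main} means the polar of the difference body $P-P$, which is origin-symmetric and so has a canonical polar. We must also check that this agrees with the polar of $P$ taken about its center of symmetry, which holds after the translation above.
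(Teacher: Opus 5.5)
Your proposal is correct and is essentially the paper's own proof: use $P-P=2P$ for symmetric $P$, so $\vol((P-P)^\circ)=2^{-d}\vol(P^\circ)$, and substitute into \eqref{eq:main}. Your added remarks on translating the center to the origin and on $P+P=2P$ by convexity are details the paper leaves implicit.
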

\begin{proof}
    We have $\vol (P-P)^\circ = \vol (2P)^\circ = 2^{-d}\vol(P^\circ)$.
    Hence
    \[\vol(P) \vol(P^\circ) = 2^d \vol(P) \vol(P-P)^\circ \overset{\text{\eqref{eq:main}}}\le \frac{2^d}{(d!)^2} \mathcal F(P).\qedhere\]
\end{proof}

In particular, assuming the Mahler bound $\vol(P)\vol(P^\circ)\ge 4^d/d!$, we get
$$\mathcal F(P) \overset{\text{\eqref{eq:cs}}}\ge \frac{(d!)^2}{2^d} \vol(P)\vol(P^\circ) 
\ge \frac{(d!)^2}{2^d}\cdot\frac{4^d}{d!} = 2^d d!.$$
which is precisely the bound predicted by \textit{Kalai's flag conjecture}.
In other words, \textit{Mahler's conjecture} for centrally symmetric polytopes implies the flag conjecture.
Such a direct relation between the conjectures has been suspected for a long time, but was not previously known.
More generally, any lower bound on the Mahler~volume now yields a lower bound on the flag number.
For example, Kuperberg's bound $\vol(P)\vol(P^\circ)\ge\pi^d/d!$ \cite{kuperberg2008mahler} yields
$$\mathcal F(P) \ge \Big(\frac\pi2\Big)^d d! \approx 1.5707^d d!.$$

The equality case remains unresolved in this note. 

\begin{conjecture}\,
    \label{conj:equality_case}
    \begin{myenumerate}
        \item Equality in \eqref{eq:cs} is attained exactly for the Hanner polytopes.
        \item Equality in \eqref{eq:main} is attained exactly for the Hanner polytopes and their faces.
    \end{myenumerate}
\end{conjecture}


\subsection{A comment on OpenAI's proof of Mahler's conjecture}

On October~6, 2026 OpenAI announced a solution of Mahler's conjecture obtained through use~of an internal frontier model \cite{OpenAI2026SymmetricMahler}.
They provide a writeup of the proof alongside a formalization in Lean.
Assuming this solution is correct, this article therefore provides a proof of Kalai's flag conjecture.

It is worth noting some parallels between the proof strategies applied to Mahler's conjecture and \cref{res:cs}.
Despite the apparent primal-dual symmetry in the~formulation of both conjectures, neither one treats primal and dual equally. Instead, in both proofs, one uses points $t\in P^\circ$ to parametrize certain subdivisions~of~$P$~(pulling triangulations in our case).
This then also induces a partition of $P^\circ$, where points $t_1,t_2\in P^\circ$ from the same region give the same subdivision of $P$. 
In our proof these regions are polytopal, while in the proposed proof of Mahler they are real analytic.

\section{Pulling triangulations}
\label{sec:pulling}

The proof of \cref{thm:main} uses so-called \emph{pulling triangulations}. 
These form a~type~of polytope triangulation that requires no new internal vertices (unlike, say, a barycentric triangulation).
We recall the relevant basics here (see, for example, \cite{deLoera2010triangulations,lee1997subdivisions}).

A pulling triangulation can be constructed from an order on the vertices.
Such an order induces an order on the vertices of each face, which we use to recursively define the pulling triangulation by induction over the faces.
Triangulating a vertex or edge is trivial and independent of the vertex order.
Suppose we already triangulated the $(k-1)$-faces. To triangulate a $k$-face $F$, let $v_i\in F$ be the vertex of minimal~index.
For each facet $G$ of $F$ and each already \mbox{constructed \emph{pulling simplex}~$\Delta\subseteq G$,\nls $\Delta\vee v_i$} (the cone over $\Delta$ with apex $v_i$) defines a pulling simplex of $F$.
Ignoring any~degenerate simplices, we obtain a triangulation of $F$.

There is a way to index the pulling simplices via the flags of $P$.
If $\bs F=\{F_0\subset\cdots$ $\subset F_d\}$ is a flag, let $v(F_k)$ be the vertex $v_i\in F_k$ with minimal index $i$.
One can then verify that
$$P=\dot{\bigcup_{\bs F}}\, \Delta_v(\bs F),
\qquad \text{where } \Delta_v(\bs F) := \conv\{v(F_0),...,v(F_d)\}$$
defines the same pulling triangulation of $P$ \wrt\ the given vertex order.
Note that some of the $\Delta_v(\bs F)$ might be degenerate.
Those will not be relevant later. 

\section{Proof of \cref{thm:main}}

Write $Q:=(P-P)^\circ$.
The proof idea is to subdivide $P\times Q$ into finitely many polytopal pieces. We then sort the pieces into $\mathcal F(P)$-many classes and show that~the total volume in each class is not more than $1/(d!)^2$. 
We first define the subdivision.

Let $\mathcal V$ be the vertex set of $P$.
For a (bijective) labeling $v\:\{1,...,n\}\to\mathcal V$, define
$$Q_v:= \{t\in Q\mid \<t,v_1\>\le \cdots \le \<t,v_n\>\}.$$
Each individual labeling $v$ also defines an order on the vertices, and hence a pulling triangulation of $P$ in the sense of \cref{sec:pulling}.
We obtain two subdivisions
$$Q=\dot{\bigcup_v}\, Q_v, \qquad P=\dot{\bigcup_{\bs F}}\, \Delta_v(\bs F),$$
with $\Delta_v(\bs F) := \conv\{v(F_0),...,v(F_d)\}$ defined as in \cref{sec:pulling}.
Note that for $t\in Q_v$ the order on the vertices is given by the linear form $\<t,\_\>$ and hence
\begin{align}
    \label{eq:order}
    \<t,v(F_0)\> \ge \cdots\ge \<t,v(F_d)\>.
\end{align}
%
%

Now we have
\begin{align*}
    \vol(P)\vol(Q) 
    &= \sum_v\sum_{\bs F} \vol(\Delta_v(\bs F))\vol(Q_v)
    \\&= \sum_{\bs F} \underbrace{\sum_v \vol(\Delta_v(\bs F))\vol(Q_v)}_{(*)\;\le 1/(d!)^2}
    \le \frac{\mathcal F(P)}{(d!)^2}
\end{align*}
and it remains to prove $(*)$.

\begin{lemma}
    For each flag $\bs F$ holds
    $$\sum_v \vol(\Delta_v(\bs F))\vol(Q_v)\le \frac1{(d!)^2}.$$
\end{lemma}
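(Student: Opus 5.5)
The plan is to fix the flag $\bs F=\{F_0\subset\cdots\subset F_d\}$ and rewrite the sum over labelings $v$ as a single integral over $Q$. That integral is then interpreted as the volume swept out by a piecewise linear map from $Q$ into a standard simplex.

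\emph{Rewriting as an integral.} For $t\in Q_v$, by \eqref{eq:order}, the vertex $v(F_k)$ is a vertex of $F_k$ maximizing $\langle t,\cdot\rangle$. For generic $t$ this maximizer is unique, so write $w_k(t):=v(F_k)$; it depends only on $t$, not on $v$. Hence
\[
\sum_v \vol(\Delta_v(\bs F))\vol(Q_v)=\int_Q \vol\big(\conv\{w_0(t),\dots,w_d(t)\}\big)\,dt .
\]
Using $\vol(\conv\{w_0,\dots,w_d\}) = \frac1{d!}\,\big|\det(w_1-w_0,\dots,w_d-w_{d-1})\big|$, it remains to show
\[
\int_Q \big|\det(w_k(t)-w_{k-1}(t))_{k=1}^d\big|\,dt\le \frac1{d!}.
\]

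\emph{The map.} Let $h_{F}(t):=\max_{x\in F}\langle t,x\rangle$ be the support function of a face $F$. Define $\Phi\colon\RR^d\to\RR^d$ by
\[
\Phi(t)_k := h_{F_k}(t)-h_{F_{k-1}}(t),\qquad k=1,\dots,d.
\]
Each $h_{F_k}$ is piecewise linear with gradient $w_k(t)$ wherever it is differentiable. So $\Phi$ is piecewise linear, linear on each $Q_v$, and its Jacobian there has rows $w_k(t)-w_{k-1}(t)$; its absolute determinant is exactly the integrand above.

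On $Q$ the image lands in the standard simplex. Since $F_{k-1}\subset F_k$, each $\Phi(t)_k\ge0$. The sum telescopes to $h_P(t)-\langle t,v_0\rangle$, where $F_0=\{v_0\}$; this is at most $h_{P-P}(t)\le 1$ because $t\in Q=(P-P)^\circ$. Thus $\Phi(Q)\subseteq\Sigma:=\{s\ge0:\sum s_k\le1\}$, and $\vol(\Sigma)=1/d!$. By the area formula,
\[
\int_Q|J\Phi|\,dt=\int_\Sigma \#\big(\Phi^{-1}(s)\cap Q\big)\,ds ,
\]
so the lemma follows once $\Phi$ is shown to be injective almost everywhere on $Q$.

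\emph{Main obstacle: injectivity.} I expect this step to be the hardest. My first attempt would be a degree argument.
\begin{myenumerate}
\item $\Phi$ is positively homogeneous and maps $\RR^d$ into the closed orthant $\RR^d_{\ge0}$.
\item On every full-dimensional region $Q_v$ where $\Delta_v(\bs F)$ is nondegenerate, the sign of $\det J\Phi$ should be the same. This should follow from the pulling construction: $w_k$ lies in $F_k$ but not in the affine hull of $F_{k-1}$, so the simplices $\Delta_v(\bs F)$ are coherently oriented relative to the flag.
\item Given a constant sign, $\Phi$ is a piecewise linear map whose local degree is $\pm1$ wherever it is nondegenerate. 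It suffices to show a generic point of the orthant has exactly one preimage on the unit sphere directions, e.g.\ by computing the degree along a ray: as $t$ runs through directions near the normal cone of $F_0$, the preimage is unique. Constancy of degree then gives the count $\le1$, since regions of degenerate Jacobian contribute measure zero.
\end{myenumerate}
If the global degree argument is awkward, I would fall back on induction along the flag. Given $s$, the coordinates $s_1,\dots,s_d$ successively determine the component of $t$ along $\mathrm{aff}(F_k)$ modulo $\mathrm{aff}(F_{k-1})$. I would try to show each step is a monotone, hence injective, piecewise linear function of one new parameter, using convexity of $h_{F_k}$ restricted to the appropriate fiber.
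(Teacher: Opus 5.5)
Your reduction is the paper's. For $t\in Q_v$ one has $\Phi(-t)=D_vt$. Your containment $\Phi(Q)\subseteq\Sigma$ is the paper's item (i), and the area formula plus a.e.\ injectivity is its item (ii). There is a small slip: by \eqref{eq:order}, $v(F_k)$ \emph{minimizes} $\langle t,\cdot\rangle$ on $F_k$, and your integral identity survives only because $Q=-Q$.

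The proposal is not yet a proof, and the gap is exactly at the step you flag. The degree route does not work as written. $\Phi$ vanishes on the full-dimensional normal cone $N_P(v_0)$, and every cell with $w_k=w_{k-1}$ is mapped into $\{s_k=0\}$. So $\Phi\colon\RR^d\to\RR^d$ is not proper and has no degree. You can repair this by restricting to $\Phi^{-1}(\RR^d_{>0})\to\RR^d_{>0}$, which is proper because singular cells land in coordinate hyperplanes. You would then still need one regular value in the open orthant with exactly one preimage. Your suggested base computation near the normal cone of $F_0$ does not supply one, since that is precisely where $\Phi$ degenerates. Producing such a point essentially requires an injectivity argument anyway.

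Your fallback is the right argument, and in substance it is the paper's. What it needs is \emph{strict} monotonicity; mere monotonicity does not give injectivity.
\begin{itemize}
\item Translate so that $v_0=0$ and set $L_k=\operatorname{span}F_k$. Then $s_1+\dots+s_k=h_{F_k}(t)$ depends only on $t|_{L_k}$.
\item Fix $t|_{L_{k-1}}$. Its extensions to $L_k$ are $t_0+\tau\phi$, where $\phi|_{L_{k-1}}=0$ and $\phi>0$ on $F_k\setminus F_{k-1}$, because $F_{k-1}$ is a facet of $F_k$.
\item Set $g(\tau)=\max_{x\in F_k}\big(t_0(x)+\tau\phi(x)\big)$. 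It is convex and nondecreasing, and it equals $h_{F_{k-1}}(t)$ for $\tau\ll0$. By convexity it is strictly increasing from the point where it first exceeds that value.
\item Hence, given $t|_{L_{k-1}}$, the value $s_1+\dots+s_k$ determines $\tau$ uniquely whenever $s_k>0$. Inductively, $\Phi$ is injective on $\Phi^{-1}(\RR^d_{>0})$, and $\Sigma\setminus\RR^d_{>0}$ is a null set.
\end{itemize}

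The paper phrases the same step as a two-point comparison, in its minimizing convention. Suppose $D_vt_v=D_wt_w$ and the two forms agree on $L_{k-1}$. Then $t_v-t_w=c\phi$ on $L_k$. Extremality of $v(F_k)$ and $w(F_k)$ gives $c\,\phi(w(F_k))\ge0\ge c\,\phi(v(F_k))$. Nondegeneracy makes both $\phi$-values positive, so $c=0$. With this step written out, your proof is complete and coincides with the paper's.
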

\begin{proof}
    We have
    $$
    \vol(\Delta_v(\bs F)) = \frac1{d!}
    \left|\smash{\,\det\!\underbrace{
    \begin{pmatrix}
        v(F_0) - v(F_1)
        \\
        \vdots
        \\
        v(F_{d-1}) - v(F_d)
    \end{pmatrix}}_{=:D_v}}
    \mathrlap{\phantom{
    \begin{matrix}
        x \\ x \\ x  
    \end{matrix}}}\right|
    .
    \mathrlap{\phantom{
    \underbrace{\begin{matrix}
        x \\ x \\ x  
    \end{matrix}}_D
    }}
    $$
    and therefore
    $$\sum_v\vol(\Delta_v(\bs F))\vol(Q_v)= \frac1{d!}\sum_v|\kern-1pt\det(D_v)|\vol(Q_v)=\frac1{d!} \sum_v\vol(D_v Q_v).$$
    Thus, it suffices to bound the right sum by $1/d!$.
    This follows with two ingredients:
    \begin{myenumerate}
        \item $D_vQ_v\subseteq \bs\Delta_d:=\{x\in\RR^d_{\ge0}\mid x_1+\cdots+x_d \le 1\}$, and
        \item if $v\ne w$ then $D_vQ_v$ and $D_w Q_w$ have disjoint interiors.
    \end{myenumerate}
    Then
    \begin{equation}
    \label{eq:le_simplex}
        \sum_v \vol(D_v Q_v) \overset{\text{\itm2}}= \vol\Big(\bigcup_v D_vQ_v\Big) \overset{\text{\itm1}}\le \vol(\bs\Delta_d) =
        \frac1{d!}.        
    \end{equation}

    Proof of \itm1:
    fix $t\in Q_v$. Then $(D_v t)_k = \<t,v(F_{k-1})\>-\<t,v(F_k)\> \ge0$ by \eqref{eq:order}.\nls
    Also, through telescoping we get
    \begin{equation}
    \label{eq:le_one}
        \sum_k (D_v t)_k = \<t,v(F_0)\>-\<t,v(F_d)\>\le 1,
    \end{equation}
    where the inequality follows from
    $$t\in Q=(P-P)^\circ = \big\{\,t\in\RR^d \;\big\vert\; \max_{x\in P}\<t,x\> - \min_{x\in P} \<t,x\> \le 1\,\big\}.$$

    Proof of \itm2:
    For sake of contradiction suppose that $D_vQ_v$ and $D_wQ_w$ intersect~in an interior point
    $D_vt_v=D_wt_w$. In particular, $D_v$ and $D_w$ are non-singular, and~$t_v\in \Int(Q_v)$ and $t_w\in \Int(Q_w)$.
    Unfolding the definition of the matrices yields
    \begin{equation}
        \label{eq:eq}
        \<t_v,v(F_{k-1})\>-\<t_v,v(F_k)\> = \<t_w,w(F_{k-1})\>-\<t_w,w(F_k)\>,\;\;\text{for $k\in\{1,...,d\}$}.
    \end{equation}
    
    Note that $v(F_0)=w(F_0)$, and we may translate $P$ so that $v(F_0)=w(F_0)=0$.
    Then the system \eqref{eq:eq} can be equivalently expressed as
    \begin{equation}
        \label{eq:eq2}
        \<t_v,v(F_k)\> = \<t_w,w(F_k)\>,\;\;\text{for $k\in\{0,...,d\}$}.
    \end{equation}

    We show that the linear forms $\<t_v,\_\>$ and $\<t_w,\_\>$ agree on $F_k$ for all $k$.
    This~is clear for $k=0$.
    Suppose then that they agree on $F_{k-1}$.
    Since on $F_k$ the vertex~$v(F_k)$ \mbox{minimizes~$\<t_v,\_\>$}, and the vertex $w(F_k)$ minimizes $\<t_w,\_\>$, we get
    %
    %
    $$
        \<t_v,w(F_k)\> \ge \<t_v,v(F_k)\> \overset{\eqref{eq:eq2}}= \<t_w,w(F_k)\>
    $$
    \vspace{-3ex}
    $$
        \<t_v,v(F_k)\> \overset{\eqref{eq:eq2}}= \<t_w,w(F_k)\> \le \<t_w,v(F_k)\>
    $$
    Since $v(F_k)$ and $w(F_k)$ lie on the same side of $F_{k-1}$ in $F_k$, we must have $\<t_v,\_\>=\<t_w,\_\>$ on both points, and actually, on all of $F_k$.
    %
    In particular, since $F_d$ is spanning, we get $t_v=t_w$.
    This means $t_v,t_w\in \Int(Q_v)\cap \Int(Q_w)$, which implies $v=w$. 
    \qedhere

\end{proof}

\small 

\vspace{1em}

\subsection*{Acknowledgments.}
I want to thank Raman Sanyal and Jan Stricker for the many discussions about 2-level polytopes.
This is where we rediscovered this upper bound, before we learned that it has already been conjectured by Freij, Schmitt, Schymura and Ziegler for general polytopes.
In the context of 2-level polytopes, pulling triangulations come up naturally, and they lead to the strategy that was eventually used here.

\subsection*{Funding}The author is funded by the SPP 2458 ``Combinatorial Synergies'' (project ID 539851419), funded by the Deutsche Forschungsgemeinschaft (DFG, German Research Foundation).

\subsection*{Statement on AI use}
Generative AI was used during the mathematical development of this article. Starting from a proof strategy supplied and guided by the author, it assisted in developing the proof. 
All ideas were substantially revised and simplified by the author, who determined its final structure and presentation.
The entire article is written by the author without AI assistance. The author takes full responsibility for the mathematical content.


\bibliographystyle{abbrv}
\bibliography{literature}

\end{document}